\newtheorem{theorem}{Theorem}
\newtheorem{proposition}[theorem]{Proposition}
\newtheorem{corollary}[theorem]{Corollary}
\theoremstyle{definition}
\newtheorem{question}[theorem]{Question}
\newcommand{\IN}{\mathbb N}
\newcommand{\IZ}{\mathbb Z}
\newcommand{\IC}{\mathbb C}
\title{Difference bases in dihedral groups}
\author{Taras Banakh and Volodymyr Gavrylkiv}
\address[T.~Banakh]{Ivan Franko National University of Lviv (Ukraine),
and \newline Institute of Mathematics, Jan Kochanowski University in
Kielce (Poland)}
\email{t.o.banakh@gmail.com}
\address[V.~Gavrylkiv]{Vasyl Stefanyk Precarpathian National
University,
Ivano-Frankivsk, Ukraine} \email{vgavrylkiv@gmail.com}
\subjclass{05B10, 05E15, 20D60}
\keywords{dihedral group, difference-basis,
difference characteristic}
\begin{document}

\begin{abstract} A subset $B$ of a group $G$ is called a {\em
difference basis} of $G$ if each element $g\in G$ can be written as the
difference $g=ab^{-1}$ of some elements $a,b\in B$. The smallest
cardinality $|B|$ of a difference basis $B\subset G$ is called the {\em
difference size} of $G$ and is denoted by $\Delta[G]$.
The fraction
$\eth[G]:=\Delta[G]/{\sqrt{|G|}}$ is called the {\em difference characteristic} of $G$.
We prove that for every $n\in\IN$ the dihedral group
$D_{2n}$ of order $2n$ has the difference characteristic
$\sqrt{2}\le\eth[D_{2n}]\leq\frac{48}{\sqrt{586}}\approx1.983$.
Moreover, if $n\ge 2\cdot 10^{15}$, then  $\eth[D_{2n}]<\frac{4}{\sqrt{6}}\approx1.633$.
Also we calculate the difference sizes and characteristics of all dihedral groups of cardinality $\le80$.
\end{abstract}
\maketitle



A subset $B$ of a group $G$ is called a {\em difference basis} for a subset $A\subset G$ if each element $a\in A$
can be written as $a=xy^{-1}$ for some $x,y\in B$.
 The smallest cardinality of a difference basis for $A$ is called the {\em difference size} of $A$ and is denoted by $\Delta[A]$. For example, the set $\{0,1,4,6\}$ is a difference basis for the interval $A=[-6,6]\cap\IZ$ witnessing that $\Delta[A]\le 4$.

The definition of a difference basis $B$ for a set $A$ in a group $G$ implies that $|A|\le |B|^2$ and gives a lower bound $ \sqrt{|A|}\le\Delta[A]$.
The fraction
$$\eth[A]:=\frac{\Delta[A]}{\sqrt{|A|}}\ge1$$is called the {\em difference characteristic} of $A$.

For a real number
 $x$ we put $$\lceil x\rceil=\min\{n\in\IZ:n\ge x\}\mbox{ and }\lfloor
x\rfloor=\max\{n\in\IZ:n\le x\}.$$

The following proposition is proved in \cite[1.1]{BGN}.

\begin{proposition}\label{p:BGN} Let $G$
be a finite group. Then
\begin{enumerate}
\item $ \frac{1+\sqrt{4|G|-3}}2\le \Delta[G]\le
\big\lceil\frac{|G|+1}2\big\rceil$,
\item $\Delta[G]\le \Delta[H]\cdot \Delta[G/H]$ and
$\eth[G]\le\eth[H]\cdot\eth[G/H]$ for any normal subgroup
$H\subset G$;
\item $\Delta[G]\le |H|+|G/H|-1$ for any subgroup $H\subset G$.
\end{enumerate}
\end{proposition}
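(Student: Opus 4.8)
The plan is to treat the three items separately, since each relies on a different elementary idea. For (1), I would obtain the lower bound by a double-counting argument. Fix a difference basis $B\subset G$ of size $k=\Delta[G]$ and consider the map $(a,b)\mapsto ab^{-1}$ from $B\times B$ onto $G$. The $k$ diagonal pairs $(a,a)$ all map to the identity, so the $|G|-1$ non-identity elements must be hit by the $k^2-k$ off-diagonal pairs; hence $|G|-1\le k^2-k$, which rearranges to $k\ge\frac{1+\sqrt{4|G|-3}}2$. For the upper bound I would show that \emph{every} subset $B\subset G$ with $|B|>|G|/2$ is already a difference basis: for any $g\in G$ the translate $gB$ and $B$ together have more than $|G|$ elements, so they meet, giving $y\in B$ with $gy=x\in B$ and thus $g=xy^{-1}$. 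Taking $|B|=\lfloor|G|/2\rfloor+1$ and checking the identity $\lfloor|G|/2\rfloor+1=\lceil\frac{|G|+1}2\rceil$ finishes the bound.

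For (2), I would combine difference bases of the factors. Choose difference bases $B_H\subset H$ and $B_Q\subset Q:=G/H$ of sizes $\Delta[H]$ and $\Delta[Q]$, and lift $B_Q$ to a set $L\subset G$ on which the quotient map $\pi\colon G\to Q$ restricts to a bijection onto $B_Q$. I claim $B:=L\cdot B_H$ works. Given $g\in G$, write $\pi(g)=\pi(a)\pi(c)^{-1}$ with $a,c\in L$; then $w:=a^{-1}gc$ lies in $\ker\pi=H$, so $w=uv^{-1}$ for some $u,v\in B_H$, and therefore $g=a\,u v^{-1}c^{-1}=(au)(cv)^{-1}$ with $au,cv\in B$. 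Distinctness of the products $au$ ($a\in L$, $u\in B_H$) gives $|B|=\Delta[H]\Delta[Q]$, and dividing by $\sqrt{|G|}=\sqrt{|H|}\sqrt{|Q|}$ yields $\eth[G]\le\eth[H]\,\eth[Q]$.

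For (3), where $H$ need not be normal, I would take $B=H\cup T$ for a transversal $T$ of the left cosets chosen with $e\in T$. Any $g\in G$ lies in some coset $g_iH$, say $g=g_ih$ with $g_i\in T$ and $h\in H$; then $g=g_i(h^{-1})^{-1}$ exhibits $g$ as a difference of the elements $g_i\in T\subset B$ and $h^{-1}\in H\subset B$. Since $e$ lies in both $H$ and $T$, we get $|B|\le|H|+|G/H|-1$.

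The only genuinely delicate point is (2): because $G$ need not be abelian one must be careful about the order of multiplication and the choice of the lift $L$, arranging the factorization so that the ``$H$-part'' $w=a^{-1}gc$ is sandwiched correctly and can be split by $B_H$ without disturbing the coset bookkeeping. The remaining two items are routine once the counting in (1) and the transversal trick in (3) are set up.
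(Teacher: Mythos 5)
Your proposal is correct and complete: the quadratic counting bound and the pigeonhole argument for (1), the lifted product basis $L\cdot B_H$ for (2), and the transversal-union trick for (3) all work as you describe. Note that the paper itself gives no proof of this proposition --- it is quoted from Proposition~1.1 of the reference [BGN] --- and your arguments are exactly the standard ones used there, so there is nothing to reconcile.
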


In \cite{KL} Kozma and Lev
proved (using the classification of finite simple groups) that each finite group $G$ has difference
characteristic $\eth[G]\le\frac{4}{\sqrt{3}}\approx 2.3094$.

In this paper we shall evaluate the difference characteristics of dihedral groups and prove that each diherdal group $D_{2n}$ has $\eth[D_{2n}]\leq\frac{48}{\sqrt{586}}\approx 1.983$. Moreover, if $n\ge 2\cdot 10^{15}$, then  $\eth[D_{2n}]<\frac{4}{\sqrt{6}}\approx1.633$. We recall that the {\em dihedral group} $D_{2n}$ is the isometry group of a regular $n$-gon. The dihedral group $D_{2n}$ contains a normal cyclic subgroup of index 2. A standard model of a cyclic group of order $n$ is the multiplicative
group $$C_n=\{z\in\IC:z^n=1\}$$ of $n$-th roots of $1$.
The group $C_n$ is isomorphic to the additive group of the ring $\IZ_n=\IZ/n\IZ$.





\begin{theorem}\label{t:dihedral} For any numbers $n,m\in\IN$ the dihedral group
$D_{2nm}$ has the difference size $$2\sqrt{nm}\le \Delta[D_{2nm}]\le
\Delta[D_{2n}]\cdot\Delta[C_m]$$and the difference characteristic
$\sqrt{2}\le\eth[D_{2nm}]\le \eth[D_{2n}]\cdot\eth[C_{m}]$.
\end{theorem}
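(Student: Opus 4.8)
The plan is to obtain the upper bounds from Proposition~\ref{p:BGN}(2) and the lower bounds from a direct counting argument exploiting the partition of $D_{2nm}$ into rotations and reflections.

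For the upper bound, realize $D_{2nm}$ as $\langle a,b\mid a^{nm}=b^2=1,\ bab^{-1}=a^{-1}\rangle$ and consider the subgroup $H=\langle a^n\rangle$. Since $a$ has order $nm$, the element $a^n$ has order $m$, so $H\cong C_m$. The subgroup $H$ is the unique cyclic subgroup of order $m$ in the rotation subgroup $\langle a\rangle$, hence characteristic in $\langle a\rangle$ and therefore normal in $D_{2nm}$; alternatively one checks directly that $ba^nb^{-1}=a^{-n}\in H$. The quotient $D_{2nm}/H$ is generated by the images $\bar a,\bar b$ subject to $\bar a^{\,n}=\bar b^2=1$ and $\bar b\bar a\bar b^{-1}=\bar a^{-1}$, so $D_{2nm}/H\cong D_{2n}$. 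Applying Proposition~\ref{p:BGN}(2) to this normal subgroup then yields $\Delta[D_{2nm}]\le \Delta[H]\cdot\Delta[D_{2nm}/H]=\Delta[C_m]\cdot\Delta[D_{2n}]$, and likewise $\eth[D_{2nm}]\le\eth[C_m]\cdot\eth[D_{2n}]$.

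For the lower bound, I would take an arbitrary difference basis $B$ for $D_{2nm}$ and split it as $B=B_r\sqcup B_s$, where $B_r=B\cap\langle a\rangle$ consists of rotations and $B_s$ of reflections; put $p=|B_r|$ and $q=|B_s|$. A short computation in the group shows that a product $xy^{-1}$ is a reflection precisely when exactly one of $x,y$ is a reflection, and that the reflection so obtained equals $a^{i+j}b$, where $a^i$ is the rotation factor and $a^jb$ the reflection factor. Hence the reflections represented by $B$ are indexed by the sumset $I+J \pmod{nm}$, where $I=\{i:a^i\in B_r\}$ and $J=\{j:a^jb\in B_s\}$, so there are at most $|I|\cdot|J|=pq$ of them. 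Since $D_{2nm}$ has $nm$ reflections, all of which must be represented, this forces $pq\ge nm$, whence by the arithmetic–geometric mean inequality $\Delta[D_{2nm}]=|B|=p+q\ge 2\sqrt{pq}\ge 2\sqrt{nm}$. Dividing by $\sqrt{|D_{2nm}|}=\sqrt{2nm}$ gives $\eth[D_{2nm}]\ge\sqrt2$.

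The genuinely substantive step is the observation in the previous paragraph: reflections of $D_{2nm}$ can arise only from a mixed rotation–reflection pair, which confines them to a sumset of size at most $pq$ and forces $pq\ge nm$. Everything else is bookkeeping, namely verifying the normality of $\langle a^n\rangle$ and the isomorphism $D_{2nm}/\langle a^n\rangle\cong D_{2n}$ for the upper bound, after which both difference-characteristic inequalities follow at once by dividing the corresponding difference-size inequalities by the appropriate square roots of the group orders.
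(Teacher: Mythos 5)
Your proposal is correct and follows essentially the same route as the paper: the upper bound via Proposition~\ref{p:BGN}(2) applied to the normal subgroup $C_m\cong\langle a^n\rangle$ with quotient $D_{2n}$, and the lower bound by splitting a difference basis into its rotation and reflection parts and observing that the $nm$ reflections can only arise from mixed products, forcing the product of the two part-sizes to be at least $nm$. The paper phrases this last step multiplicatively (writing $D=A\cup sB$ and showing $AB^{-1}=C_{nm}$) rather than via your additive sumset $I+J$, but the argument is the same.
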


\begin{proof} It is well-known that the dihedral group $D_{2nm}$ contains
a normal cyclic subgroup of order $nm$, which can be identified with the cyclic group $C_{nm}$. The subgroup $C_m\subset C_{nm}$ is normal in $D_{2mn}$ and the
quotient group $D_{2mn}/C_m$ is isomorphic to $D_{2n}$. Applying
Proposition~\ref{p:BGN}(2), we obtain the upper bounds $\Delta[D_{2n}]\le
\Delta[D_{2nm}/C_m]\cdot\Delta[C_m]=\Delta[D_{2n}]\cdot\Delta[C_m]$
and $\eth[D_{2nm}]\le\eth[D_{2n}]\cdot\eth[C_{m}]$.
\smallskip

Next, we prove the lower bound  $2\sqrt{nm}\le\Delta[D_{2nm}]$. Fix any
element $s\in D_{2nm}\setminus C_{nm}$ and observe that $s=s^{-1}$ and
$sxs^{-1}=x^{-1}$ for all $x\in C_{nm}$.
Fix a difference basis $D\subset D_{2nm}$ of cardinality
$|D|=\Delta[D_{2nm}]$ and write $D$ as the union $D=A\cup sB$ for some
sets $A,B\subset C_{nm}\subset D_{2nm}$. We claim that $AB^{-1}=C_{nm}$. Indeed,
for any $x\in C_{nm}$ we get $xs\in sC_{nm}\cap (A\cup sB)(A\cup
sB)^{-1}=AB^{-1}s^{-1}\cup sBA^{-1}$ and hence $$x\in
AB^{-1}s^{-1}s^{-1}\cup sBA^{-1}s^{-1}=AB^{-1}\cup B^{-1}A=AB^{-1}.$$
So, $C_{nm}=AB^{-1}$ and hence $nm\le|A|\cdot |B|$. Then
$\Delta[D_{2nm}]=|A|+|B|\ge \min\{l+k:l,k\in\IN,\;lk\ge nm\}\ge2\sqrt{nm}$
and
$\eth[D_{2nm}]=\frac{\Delta[D_{2nm}]}{\sqrt{2nm}}\ge\frac{2\sqrt{nm}}{\sqrt{2nm}}=\sqrt{2}$.
\end{proof}

\begin{corollary}\label{c:dihedral} For any number $n\in\IN$ the dihedral group
$D_{2n}$ has the difference size $$2\sqrt{n}\le \Delta[D_{2n}]\le
2\cdot\Delta[C_n]$$and the difference characteristic
$\sqrt{2}\le\eth[D_{2n}]\le \sqrt{2}\cdot\eth[C_{n}]$.
\end{corollary}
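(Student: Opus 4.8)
The plan is to obtain this corollary as the special case of Theorem~\ref{t:dihedral} in which the parameter pair $(n,m)$ is replaced by $(1,n)$. Under this substitution we have $D_{2nm}=D_{2\cdot 1\cdot n}=D_{2n}$, $D_{2n}=D_{2\cdot 1}=D_2$, and $C_m=C_n$, so the two inequalities furnished by the theorem read
$$2\sqrt{n}\le\Delta[D_{2n}]\le\Delta[D_2]\cdot\Delta[C_n]\quad\text{and}\quad\sqrt2\le\eth[D_{2n}]\le\eth[D_2]\cdot\eth[C_n].$$
These already give the desired lower bounds $2\sqrt{n}\le\Delta[D_{2n}]$ and $\sqrt2\le\eth[D_{2n}]$ verbatim. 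To turn the upper bounds into the stated form, the only remaining task is to evaluate the base case, namely to show $\Delta[D_2]=2$ and hence $\eth[D_2]=2/\sqrt2=\sqrt2$.

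To pin down $\Delta[D_2]$, I would inspect the two-element group $D_2=\{e,s\}$ directly. A singleton $\{a\}$ has difference set $\{aa^{-1}\}=\{e\}$, which fails to represent the reflection $s$, so no one-point set is a difference basis; on the other hand the whole group $\{e,s\}$ is trivially a difference basis. Hence $\Delta[D_2]=2$, and substituting this together with $\eth[D_2]=\sqrt2$ into the displayed bounds yields exactly $\Delta[D_{2n}]\le 2\cdot\Delta[C_n]$ and $\eth[D_{2n}]\le\sqrt2\cdot\eth[C_n]$.

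I expect no genuine obstacle in this argument: all of the substantive work—the lower bound via the reflection identity $sxs^{-1}=x^{-1}$ and the submultiplicative upper bound via the quotient $D_{2nm}/C_m\cong D_{2n}$—has already been carried out in the proof of Theorem~\ref{t:dihedral}. The corollary is simply the instance of that theorem obtained by taking the ``base'' dihedral group to be $D_2$, made explicit by recording the elementary computation $\Delta[D_2]=2$.
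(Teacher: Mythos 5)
Your derivation is correct and is exactly the intended route: the paper states the corollary as the instance of Theorem~\ref{t:dihedral} with $(n,m)$ specialized to $(1,n)$, using $\Delta[D_2]=2$ and $\eth[D_2]=\sqrt{2}$ (as the paper itself does explicitly in the proof of Theorem~\ref{t:dup}). Your verification that a singleton cannot be a difference basis for $D_2$ is a harmless elementary addition; there is nothing to correct.
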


The difference sizes of finite cyclic groups were evaluated in \cite{BG} with the help of the  difference sizes
of the order-intervals $[1,n]\cap\IZ$ in the additive group $\IZ$ of integer numbers.
For a natural number $n\in\IN$ by $\Delta[n]$ we shall denote the difference size of
the order-interval $[1,n]\cap\IZ$ and by $\eth[n]:=\frac{\Delta[n]}{\sqrt{n}}$ its difference characteristic.
The asymptotics of the sequence $(\eth[n])_{n=1}^\infty$ was studied by R\'edei and R\'enyi \cite{RR}, Leech \cite{Leech}
and Golay \cite{Golay} who eventually proved that $$\sqrt{2+\tfrac4{3\pi}}<
\sqrt{2+\max_{0<\varphi<2\pi}\tfrac{2\sin(\varphi)}{\varphi+\pi}}\le \lim_{n\to\infty}\eth[n]=\inf_{n\in\IN}\eth[n]\le
\eth[6166]=\frac{128}{\sqrt{6166}}<\eth[6]=\sqrt{\tfrac{8}3}.$$

In \cite{BG} the difference sizes of the order-intervals $[1,n]\cap\IZ$ were applied to give
upper bounds for the difference sizes of finite cyclic groups.

\begin{proposition}\label{p:c<n} For every $n\in\IN$ the cyclic group $C_n$ has difference size $\Delta[C_n]\le\Delta\big[\lceil\frac{n-1}2\rceil\big]$,
which implies that
$$\limsup_{n\to\infty}\eth[C_n]\le\frac1{\sqrt{2}}\inf_{n\in\IN}\eth[n]\le\frac{64}{\sqrt{3083}}<\frac{2}{\sqrt{3}}.$$
\end{proposition}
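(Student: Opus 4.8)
The plan is to obtain a difference basis for $C_n$ by reducing an optimal difference basis of the interval $[1,m]\cap\IZ$ modulo $n$, where $m:=\lceil\frac{n-1}2\rceil$. First I would fix a difference basis $B\subset\IZ$ for $[1,m]\cap\IZ$ with $|B|=\Delta[m]$. Because the difference set $B-B=\{x-y:x,y\in B\}$ is symmetric and contains $0$, the fact that it covers $[1,m]$ upgrades automatically to $B-B\supseteq[-m,m]\cap\IZ$.

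Next I would push $B$ into $C_n\cong\IZ_n$ along the quotient homomorphism $\pi\colon\IZ\to\IZ_n$ and claim that $\pi(B)$ is a difference basis for $C_n$. Indeed $\pi(B)-\pi(B)=\pi(B-B)\supseteq\pi([-m,m]\cap\IZ)$, and the right-hand image is exactly $\{0,1,\dots,m\}\cup\{n-m,\dots,n-1\}$. These two arcs exhaust $\IZ_n$ as soon as they leave no gap, i.e. as soon as $n-m\le m+1$, equivalently $2m+1\ge n$; and this holds since $m=\lceil\frac{n-1}2\rceil\ge\frac{n-1}2$. Thus every $g\in C_n$ is a difference of two elements of $\pi(B)$, whence $\Delta[C_n]\le|\pi(B)|\le|B|=\Delta[m]=\Delta\big[\lceil\tfrac{n-1}2\rceil\big]$, which is the first assertion.

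For the asymptotic estimate I would divide through by $\sqrt n$. Writing $\eth[m]=\Delta[m]/\sqrt m$, the bound $\Delta[C_n]\le\Delta[m]$ gives $\eth[C_n]\le\eth[m]\sqrt{m/n}$, and since $m=\lceil\frac{n-1}2\rceil\le\frac n2$ we have $\sqrt{m/n}\le\frac1{\sqrt2}$, so $\eth[C_n]\le\frac1{\sqrt2}\,\eth[m]$. As $n\to\infty$ we also have $m\to\infty$, and the excerpt already records that $\lim_{k\to\infty}\eth[k]=\inf_{k\in\IN}\eth[k]$; taking $\limsup$ therefore yields $\limsup_{n\to\infty}\eth[C_n]\le\frac1{\sqrt2}\inf_{k}\eth[k]$. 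Substituting the Golay value $\inf_k\eth[k]\le\eth[6166]=\frac{128}{\sqrt{6166}}$ gives $\frac1{\sqrt2}\cdot\frac{128}{\sqrt{6166}}=\frac{128}{\sqrt{12332}}=\frac{64}{\sqrt{3083}}$, and the final strict inequality $\frac{64}{\sqrt{3083}}<\frac2{\sqrt3}$ follows by squaring, as it reduces to $3072<3083$.

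The only delicate step is making the modular covering exact: one must track the ceiling and the parity of $n$ carefully so that the arcs $\{0,\dots,m\}$ and $\{n-m,\dots,n-1\}$ overlap (when $n$ is even) or abut (when $n$ is odd) with no residue left uncovered. Once the inequality $2m+1\ge n$ is pinned down, the remainder is routine arithmetic.
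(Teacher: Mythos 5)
Your proof is correct and follows the standard (and essentially only natural) route: reduce an optimal difference basis of the interval $[1,\lceil\frac{n-1}2\rceil]\cap\IZ$ modulo $n$, use the symmetry of the difference set to get the two arcs $\{0,\dots,m\}$ and $\{n-m,\dots,n-1\}$, and check $2m+1\ge n$; this is the argument of the cited source \cite{BG}, as the present paper states Proposition~\ref{p:c<n} without proof. All the details check out, including the passage to the limit via $m\to\infty$ and the arithmetic $3\cdot 64^2=12288<12332=4\cdot 3083$; the only thing you might add is a remark on the degenerate case $n=1$, where $\lceil\frac{n-1}2\rceil=0$ and the inequality is a matter of convention.
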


The following upper bound for the difference sizes of cyclic groups were proved in \cite{BG}.

\begin{theorem}\label{t:cyclic} For any $n\in\IN$ the cyclic group $C_n$ has the difference characteristic:
\begin{enumerate}
\item $\eth[C_n]\le\eth[C_4]=\frac32$;
\item $\eth[C_n]\le\eth[C_2]=\eth[C_8]=\sqrt{2}$ if $n\ne 4$;
\item $\eth[C_n]\le\frac{12}{\sqrt{73}}<\sqrt{2}$ if $n\ge 9$;
\item $\eth[C_n]\le\frac{24}{\sqrt{293}}<\frac{12}{\sqrt{73}}$ if $n\ge 9$ and $n\ne 292$;
\item $\eth[C_n]<\frac2{\sqrt{3}}$ if $n\ge 2\cdot 10^{15}$.
\end{enumerate}
\end{theorem}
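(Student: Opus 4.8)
The plan is to reduce all five estimates to the difference characteristic of the order-intervals through Proposition~\ref{p:c<n}, and then to split the range of $n$ into three regimes. Combining the bound $\Delta[C_n]\le\Delta[\lceil\frac{n-1}2\rceil]$ of Proposition~\ref{p:c<n} with the elementary inequality $2\lceil\frac{n-1}2\rceil\le n$ gives the transfer inequality
$$\eth[C_n]=\frac{\Delta[C_n]}{\sqrt n}\le\frac{\Delta[\lceil\frac{n-1}2\rceil]}{\sqrt{2\lceil\frac{n-1}2\rceil}}=\frac1{\sqrt2}\,\eth\big[\lceil\tfrac{n-1}2\rceil\big],$$
so every constant in the theorem is $\frac1{\sqrt2}$ times a bound on the interval characteristic $\eth[m]$ at $m=\lceil\frac{n-1}2\rceil$, and the whole problem is pushed onto the companion analysis of $(\eth[m])_{m\ge1}$ from \cite{BG}. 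A useful bookkeeping remark is that the two consecutive orders $n=2m$ and $n=2m+1$ share the same value $\lceil\frac{n-1}2\rceil=m$, hence the same numerator $\Delta[m]$, and differ only through the denominator $\sqrt n$; this is the mechanism that forces a single exceptional order in item~(4).

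First I would pin down the records at small orders directly. Exhibiting explicit bases and matching them against the lower bound $\frac{1+\sqrt{4n-3}}2\le\Delta[C_n]$ of Proposition~\ref{p:BGN}(1) gives $\Delta[C_4]=3$, $\Delta[C_2]=2$ and $\Delta[C_8]=4$, hence $\eth[C_4]=\frac32$ and $\eth[C_2]=\eth[C_8]=\sqrt2$. Through the transfer inequality these match the two largest values of the interval sequence: the global maximum $\eth[2]=\frac3{\sqrt2}$ (at $m=2\leftrightarrow n=4$), and the value $\eth[1]=\eth[4]=2$, which is the maximum of $\eth[m]$ over all $m\ne2$ (reached at $m=1\leftrightarrow n=2$ and $m=4\leftrightarrow n=8$). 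Item~(1) is then the inequality $\eth[C_n]\le\frac1{\sqrt2}\max_m\eth[m]=\frac1{\sqrt2}\cdot\frac3{\sqrt2}=\frac32$, and item~(2) is the same computation with the value at $m=2$ removed, after checking the only other order sent to $m=2$, namely $\eth[C_5]\le\sqrt2$, by hand.

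The sharper items~(3) and~(4) are the corresponding statements for $n\ge9$, i.e. for $m=\lceil\frac{n-1}2\rceil\ge4$. Here the work is carried by the interval analysis of \cite{BG}, which supplies, via explicit (Wichmann/Leech-type) ruler constructions for the upper bounds together with Proposition~\ref{p:BGN}(1) for the lower bounds, a bound $\Delta[m]\le g(m)$ usable uniformly in $m$. Transferring it yields $\eth[C_n]\le g(\lceil\frac{n-1}2\rceil)/\sqrt n$, and a finite optimization of this expression over $n\ge9$ locates its maximum at a single even order $n=2m_0$, with the odd neighbour $n=2m_0+1$ giving the next largest value $g(m_0)/\sqrt{2m_0+1}$. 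With $2m_0=292$ these two values are exactly $\frac{12}{\sqrt{73}}$ and $\frac{24}{\sqrt{293}}$; this is why $\frac{12}{\sqrt{73}}$ is the bound valid for all $n\ge9$ in item~(3), while the stronger bound $\frac{24}{\sqrt{293}}$ of item~(4) holds for every $n\ge9$ except the single even order $n=292$. (The handful of orders below the range of the construction, such as $n=9$, where the transfer is lossy, are again checked directly.)

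Finally, item~(5) is the asymptotic regime. The $\limsup$ estimate $\frac{64}{\sqrt{3083}}<\frac2{\sqrt3}$ recorded in Proposition~\ref{p:c<n} already gives $\eth[C_n]<\frac2{\sqrt3}$ for all sufficiently large $n$, but ineffectively; producing the explicit threshold $2\cdot10^{15}$ is the crux of the matter. Through the transfer inequality this is equivalent to proving $\eth[m]<\sqrt{\tfrac83}$ for every $m\ge10^{15}$, an effective form of the convergence $\eth[m]\to\inf_m\eth[m]\le\frac{128}{\sqrt{6166}}<\sqrt{\tfrac83}$ quoted before Proposition~\ref{p:c<n}. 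The route is to fix a near-optimal base ruler, such as the length-$6166$ ruler with $128$ marks, and to splice it with a long arithmetic-progression scaffold so as to build a difference basis of the required density for \emph{every} sufficiently large interval. I expect the main obstacle to lie precisely in this quantitative step: not in any single idea, but in the careful accounting of the rounding losses of the splicing, since it is exactly that accounting which fixes the numerical value of the threshold.
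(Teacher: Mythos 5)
The paper does not actually prove Theorem~\ref{t:cyclic}: it is imported wholesale from the companion paper \cite{BG} (``The following upper bound \dots were proved in \cite{BG}''), so your attempt has to stand on its own, and on its own it has genuine gaps. Your transfer inequality $\eth[C_n]\le\frac1{\sqrt2}\eth\big[\lceil\frac{n-1}2\rceil\big]$ is correct for $n\ge2$ and is exactly Proposition~\ref{p:c<n}; the problem is that after this reduction each of the five items rests on a quantitative statement about the interval sequence $(\eth[m])_{m\ge1}$ that you neither prove nor could extract from this paper. For (1) you need $\sup_{m}\eth[m]=\eth[2]=\frac3{\sqrt2}$, for (2) you need $\sup_{m\ne2}\eth[m]=2$, for (3)--(4) you need an explicit upper bound $g(m)\ge\Delta[m]$ valid for \emph{every} $m$ together with a verification that $\max_{n\ge9}g(\lceil\frac{n-1}2\rceil)/\sqrt{n}$ equals $\frac{12}{\sqrt{73}}$ and is attained only at $n=292$, and for (5) you need the effective bound $\eth[m]<\sqrt{8/3}$ for all $m\ge10^{15}$ --- which you yourself identify as ``the crux'' and leave unproved. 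Each of these is a theorem in its own right; the results of R\'edei--R\'enyi, Leech and Golay quoted before Proposition~\ref{p:c<n} are asymptotic and yield no explicit threshold, so item (5) in particular is simply not established by your argument.

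Beyond incompleteness, the mechanism you propose for items (3)--(5) appears to be the wrong one. The numerology $\frac{12}{\sqrt{73}}=\frac{24}{\sqrt{292}}$ is indeed consistent with a worst-case pair $n\in\{292,293\}$ sharing a bound $\Delta[C_n]\le24$, but the only bound of the form $\Delta[C_{292}]\le 24=4(5+1)$ visible in this paper is Corollary~4.9 of \cite{BG} (quoted in the final Proposition), which is built from Singer perfect difference sets $\Delta[C_{q^2+q+1}]=q+1$, the submultiplicativity $\Delta[C_{nm}]\le\Delta[C_n]\Delta[C_m]$ of Proposition~\ref{p:BGN}(2), and gaps between prime powers --- not from interval rulers; an interval bound $\Delta[146]\le24$ would correspond to $\eth[146]\approx1.99$, far weaker than what ruler constructions give, so the maximum of the transferred interval bound over $n\ge9$ is very unlikely to sit at $n=292$ as you assert. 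Similarly, the threshold $2\cdot10^{15}$ in (5) is tied to Dusart's effective prime-gap estimate (again visible in the final Proposition), not to splicing the $128$-mark ruler of length $6166$. So the actual proof in \cite{BG} is a hybrid of computer-calculated values for small $n$ (Table~\ref{tab:cycl}), Singer/Bose--Chowla constructions with submultiplicativity for the middle range, and effective prime gaps for the tail, with Proposition~\ref{p:c<n} only one ingredient; your proposal channels everything through the one ingredient that cannot by itself produce the stated constants.
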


For some special numbers $n$ we have more precise upper bounds for $\Delta[C_n]$. A number $q$ is called
a {\em prime power} if $q=p^k$ for some prime number $p$ and some $k\in\IN$.

The following theorem was derived in \cite{BG} from the classical results of Singer \cite{Singer},
Bose, Chowla \cite{Bose}, \cite{Chowla} and Rusza \cite{Rusza}.

\begin{theorem}\label{Singer} Let $p$ be a prime number and $q$ be a prime power.
Then
\begin{enumerate}
\item $\Delta[C_{q^2+q+1}]=q+1$;
\item $\Delta[C_{q^2-1}]\le q-1+\Delta[C_{q-1}]\le q-1+\frac{3}2\sqrt{q-1}$;
\item $\Delta[C_{p^2-p}]\le p-3+\Delta[C_{p}]+\Delta[C_{p-1}]\le
p-3+\frac32(\sqrt{p}+\sqrt{p-1})$.
\end{enumerate}
\end{theorem}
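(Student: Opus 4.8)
The plan is to handle the three items separately, each one resting on a classical construction of a (nearly) perfect difference set that I then combine with the cyclic estimates of Theorem~\ref{t:cyclic}. For (1) I would obtain the lower bound by counting: Proposition~\ref{p:BGN}(1) gives $\Delta[C_{q^2+q+1}]\ge\frac{1+\sqrt{4(q^2+q+1)-3}}2$, and since $4(q^2+q+1)-3=(2q+1)^2$ this is exactly $q+1$. For the matching upper bound I would invoke Singer's theorem \cite{Singer}: for a prime power $q$ the projective plane $PG(2,q)$ supplies a planar difference set $D\subset C_{q^2+q+1}$ of size $q+1$ in which every nonzero residue is represented exactly once as a difference $d-d'$ with $d,d'\in D$. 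Together with $0=d-d$ this makes $D$ a difference basis, so $\Delta[C_{q^2+q+1}]\le q+1$ and (1) follows.

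For (2) I would use a Bose--Chowla relative difference set \cite{Bose,Chowla}: for a prime power $q$ there is a set $B\subset C_{q^2-1}$ with $|B|=q$ whose differences of distinct elements run exactly once through the complement $C_{q^2-1}\setminus H$ of the subgroup $H$ of order $q-1$; the count $|B|(|B|-1)=q(q-1)=|C_{q^2-1}\setminus H|$ shows the covering is forced to be exact. Thus $B$ realizes as a difference every element outside $H$, and to cover $H\cong C_{q-1}$ itself I would adjoin a difference basis $D$ of $H$. Translating $B$ so that $0\in B$ (which preserves the relative-difference-set property) puts $0\in B\cap D$, whence $B\cup D$ is a difference basis of $C_{q^2-1}$ of size at most $q+\Delta[C_{q-1}]-1=q-1+\Delta[C_{q-1}]$. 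The second inequality is then just $\Delta[C_{q-1}]\le\frac32\sqrt{q-1}$ from Theorem~\ref{t:cyclic}(1).

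Item (3) is the most delicate and rests on Ruzsa's construction \cite{Rusza}. Writing $C_{p^2-p}\cong C_{p-1}\times C_p$ by the Chinese remainder theorem and fixing a primitive root $g$ modulo $p$, I would consider $R=\{(t\bmod(p-1),\,g^{t}\bmod p):1\le t\le p-1\}$, of size $p-1$. The crux is to check that a difference of two distinct members has both coordinates nonzero---the first because the parameters are distinct modulo $p-1$, the second because $g$ is primitive---and that the resulting $(p-1)(p-2)$ differences are pairwise distinct; as this count equals the number of elements with both coordinates nonzero, $R$ covers exactly these ``off-axis'' elements. The two residual subgroups $C_{p-1}\times\{0\}$ and $\{0\}\times C_p$ I would then cover by auxiliary difference bases $D_1$ and $D_2$, harvesting two savings: $R$ already contains $(0,1)$ (take $t=p-1$), so $D_2$ may be chosen to reuse it, while $D_1$ and $D_2$ can be arranged to share $(0,0)$. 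This yields a difference basis $R\cup D_1\cup D_2$ of size at most $(p-1)+\Delta[C_{p-1}]+\Delta[C_p]-2$, and the final estimate again follows from Theorem~\ref{t:cyclic}(1).

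The main obstacle I anticipate is the bookkeeping in (3): besides verifying the exact off-axis difference structure of $R$, the delicate point is realizing the two savings simultaneously. Making $D_2$ contain both $(0,0)$ and $(0,1)$ forces a difference basis of $C_p$ to contain two points at distance $1$---which it must, since it realizes the difference $1$---after which a single translation seats that pair at $(0,0)$ and $(0,1)$; confirming that the three pieces then overlap only in the two intended points, so that inclusion--exclusion delivers exactly the saving of $2$, is the part that needs care.
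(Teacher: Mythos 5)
Your reconstruction is correct: the lower bound in (1) from Proposition~\ref{p:BGN}(1) combined with the Singer planar difference set, the Bose--Chowla relative difference set plus a translated difference basis of the order-$(q-1)$ subgroup for (2), and the Ruzsa Sidon set $\{(t,g^t)\}$ in $C_{p-1}\times C_p$ with the two carefully arranged overlaps at $(0,0)$ and $(0,1)$ for (3) all check out, including the inclusion--exclusion bookkeeping that yields the savings of $1$ and $2$ respectively. The paper itself states this theorem without proof, citing \cite{BG} and the classical sources \cite{Singer}, \cite{Bose}, \cite{Chowla}, \cite{Rusza}, and your argument is precisely the derivation those citations indicate, so this is essentially the same approach.
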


The following Table~\ref{tab:cycl} of difference sizes and characteristics of cyclic groups $C_n$ for $\le 100$ is taken from \cite{BG}.

\begin{table}[ht]
\caption{Difference sizes and characteristics of cyclic groups $C_n$ for $n\le100$}\label{tab:cycl}
\begin{tabular}{|c|c|c||c|c|c||c|c|c||c|c|c|}
\hline
$n$   & \!$\Delta[C_n]$\! & $\eth[C_n]$&$n$   & \!$\Delta[C_n]$\! & $\eth[C_n]$&$n$   & \!$\Delta[C_n]$\! & $\eth[C_n]$&$n$   & \!$\Delta[C_n]$\! & $\eth[C_n]$\\
\hline
1 & 1 & 1 	&26 & 6 & 1.1766...\!\! & 51 & 8 & 1.1202...\!\!& 76 & 10 & 1.1470...\\
2 & 2 &1.4142...  &27 & 6 & 1.1547...\!\!      & 52 & 9 & 1.2480...\!\!& 77 & 10 & 1.1396...\!\!\\
3 & 2 &1.1547...  &28 & 6 & 1.1338...\!\! 	& 53 & 9 & 1.2362...\!\!& 78 & 10 & 1.1322...\!\!\\
4 & 3 &1.5  	  &29 & 7 & 1.2998...\!\! 	& 54 & 9 & 1.2247...\!\!& 79 & 10 & 1.1250...\!\!\\
5 & 3 &1.3416...  &30 & 7 & 1.2780...\!\! 	& 55 & 9 & 1.2135...\!\!& 80 & 11 & 1.2298...\!\!\\
6 & 3 & 1.2247... &31 & 6 & 1.0776...\!\! & 56 & 9 & 1.2026...\!\!& 81 & 11 & 1.2222...\!\!\\
7 & 3 & 1.1338... &32 & 7 & 1.2374...\!\! & 57 & 8 & 1.0596...\!\!& 82 & 11 & 1.2147...\!\!\\
8 & 4 & 1.4142... &33 & 7 & 1.2185...\!\! & 58 & 9 & 1.1817...\!\!& 83 & 11 & 1.2074...\!\!\\
9 & 4 & 1.3333... 	  &34 & 7 & 1.2004...\!\! & 59 & 9 & 1.1717...\!\!& 84 & 11 & 1.2001...\!\!\\
10 & 4 & 1.2649... &35 & 7 & 1.1832...\!\! & 60 & 9 & 1.1618...\!\!& 85 & 11 & 1.1931...\!\!\\
11 & 4 & 1.2060... &36 & 7 & 1.1666...\!\! 	& 61 & 9 & 1.1523...\!\!& 86 & 11 & 1.1861...\!\!\\
12 & 4 & 1.1547... &37 & 7 & 1.1507...\!\! 	& 62 & 9 & 1.1430...\!\!& 87 & 11 & 1.1793...\!\!\\
13 & 4 & 1.1094... &38 & 8 & 1.2977...\!\! 	& 63 & 9 & 1.1338...\!\!& 88 & 11 & 1.1726...\!\!\\
14 & 5 & 1.3363... &39 & 7 & 1.1208...\!\! 	& 64 & 9 & 1.125\!\!& 89 & 11 & 1.1659...\!\!\\
15 & 5 & 1.2909... &40 & 8 & 1.2649...\!\! 	& 65 & 9 & 1.1163...\!\!& 90 & 11 & 1.1595...\!\!\\
16 & 5 & 1.25     &41 & 8 & 1.2493...\!\! 	& 66 & 10 & 1.2309...\!\!& 91 & 10 & 1.0482...\!\!\\
17 & 5 & 1.2126... &42 & 8 & 1.2344...\!\! 	& 67 & 10 & 1.2216...\!\!& 92 & 11 & 1.1468...\!\!\\
18 & 5 & 1.1785... &43 & 8 & 1.2199...\!\! 	& 68 & 10 & 1.2126...\!\!& 93 & 12 & 1.2443...\!\!\\
19 & 5 & 1.1470... &44 & 8 & 1.2060...\!\! 	& 69 & 10 & 1.2038...\!\!& 94 & 12 & 1.2377...\!\!\\
20 & 6 & 1.3416... &45 & 8 & 1.1925...\!\! 	& 70 & 10 & 1.1952...\!\!& 95 & 12 & 1.2311...\!\!\\
21 & 5 & 1.0910... &46 & 8 & 1.1795...\!\! 	& 71 & 10 & 1.1867...\!\!& 96 & 12 & 1.2247...\!\!\\
22 & 6 & 1.2792... &47 & 8 & 1.1669...\!\! 	& 72 & 10 & 1.1785...\!\!& 97 & 12 & 1.2184...\!\!\\
23 & 6 & 1.2510... &48 & 8 & 1.1547...\!\! 	& 73 & 9 & 1.0533...\!\!& 98 & 12 & 1.2121...\!\!\\
24 & 6 & 1.2247... &49 & 8 & 1.1428...\!\! 	& 74 & 10 & 1.1624...\!\!& 99 & 12 & 1.2060...\!\!\\
25 & 6 & 1.2      &50 & 8 & 1.1313...\!\! 	& 75 & 10 & 1.1547...\!\!& 100 & 12 & 1.2\\
\hline
\end{tabular}
\end{table}

Using Theorem~\ref{Singer}(1), we shall prove that for infinitely many numbers $n$ the lower and upper bounds given in
Theorem~\ref{t:dihedral} uniquely determine the difference size
$\Delta[D_{2n}]$ of $D_{2n}$.

\begin{theorem}\label{t:dup} If $n=1+q+q^2$ for some prime power $q$,
then
$$\Delta[D_{2n}]=2\cdot \Delta[C_n]=\left\lceil
2\sqrt{n}\,\right\rceil=\Big\lceil\sqrt{2|D_{2n}|}\,\Big\rceil=2+2q.$$
\end{theorem}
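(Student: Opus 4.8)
The plan is to obtain the chain of equalities by squeezing $\Delta[D_{2n}]$ between the lower and upper bounds furnished by Corollary~\ref{c:dihedral}, and then to show that both bounds collapse to the single value $2+2q$. The whole argument is a sandwich: the heavy lifting has already been done by Theorem~\ref{Singer}(1), which comes from Singer's perfect difference sets, so no new combinatorial construction is needed here.

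First I would pin down the upper bound. Since $n=1+q+q^2=q^2+q+1$ with $q$ a prime power, Theorem~\ref{Singer}(1) gives $\Delta[C_n]=q+1$, so the upper bound of Corollary~\ref{c:dihedral} reads $\Delta[D_{2n}]\le 2\cdot\Delta[C_n]=2(q+1)=2+2q$. Next I would identify the two ceiling expressions with a common quantity: because $|D_{2n}|=2n$, one has $\sqrt{2|D_{2n}|}=\sqrt{4n}=2\sqrt{n}$, so $\lceil\sqrt{2|D_{2n}|}\,\rceil=\lceil 2\sqrt{n}\,\rceil$ automatically.

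The one step that requires a small computation is evaluating $\lceil 2\sqrt{n}\,\rceil$, and I would carry it out by locating $4n$ strictly between two consecutive squares. Namely $(2q+1)^2=4q^2+4q+1<4q^2+4q+4=4n$ and $(2q+2)^2=4q^2+8q+4>4n$ for every $q\ge 1$, so that $2q+1<2\sqrt{n}<2q+2$ and hence $\lceil 2\sqrt{n}\,\rceil=2q+2=2+2q$.

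Finally I would combine these observations with the lower bound. Corollary~\ref{c:dihedral} gives $2\sqrt{n}\le\Delta[D_{2n}]$, and since $\Delta[D_{2n}]$ is an integer this forces $\Delta[D_{2n}]\ge\lceil 2\sqrt{n}\,\rceil=2+2q$; together with the upper bound $\Delta[D_{2n}]\le 2+2q$ this yields $\Delta[D_{2n}]=2+2q$, and all five displayed quantities coincide. I expect no serious obstacle: the argument is entirely a squeeze, and the only point of genuine mathematical depth is imported wholesale from Theorem~\ref{Singer}(1), with the arithmetic confirming $\lceil 2\sqrt{n}\,\rceil=2+2q$ being the sole place demanding any care.
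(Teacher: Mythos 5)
Your proposal is correct and follows essentially the same route as the paper: both arguments import $\Delta[C_n]=q+1$ from Theorem~\ref{Singer}(1), sandwich $\Delta[D_{2n}]$ between $2\sqrt{n}$ and $2\Delta[C_n]=2+2q$, and close the gap by the same arithmetic ($q^2+q+1>(q+\tfrac12)^2$, which is exactly your observation that $4n$ lies strictly between $(2q+1)^2$ and $(2q+2)^2$). The only cosmetic difference is that the paper phrases the final step as ``the gap between the bounds is less than $1$'' while you evaluate $\lceil 2\sqrt{n}\,\rceil$ explicitly; these are the same computation.
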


\begin{proof} By Theorem~\ref{Singer}(1), $\Delta[C_n]=1+q$. Since
$$2\sqrt{q^2+q+1}=2\sqrt{n}\le \Delta[D_{2n}]\le \Delta[D_2]\cdot \Delta[C_n]=2\cdot \Delta[C_n]=2+2q,$$ it suffices to
check that $(2+2q)-2\sqrt{q^2+q+1}<1$, which is equivalent to
$\sqrt{q^2+q+1}>q+\frac12$ and to $q^2+q+1>q^2+q+\frac14$.
\end{proof}

A bit weaker result holds also for the dihedral groups $D_{8(q^2+q+1)}$.

\begin{proposition}\label{t:8dup} If $n=1+q+q^2$ for some prime power $q$,
then
$$4q+3\le\Delta[D_{8n}]\le4q+4.$$
\end{proposition}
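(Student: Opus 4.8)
The plan is to derive both inequalities from Theorem~\ref{t:dihedral}, using the exact value $\Delta[C_n]=q+1$ provided by Theorem~\ref{Singer}(1) together with the single auxiliary fact $\Delta[D_8]=4$. Writing $D_{8n}=D_{2\cdot(4n)}$, the lower estimate of Theorem~\ref{t:dihedral} gives $\Delta[D_{8n}]\ge 2\sqrt{4n}=4\sqrt{q^2+q+1}$. Since $q^2+q+1>(q+\tfrac12)^2=q^2+q+\tfrac14$, this quantity strictly exceeds $4q+2$, and as $\Delta[D_{8n}]$ is an integer we conclude $\Delta[D_{8n}]\ge 4q+3$.

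For the upper bound I would factor $4n=4\cdot n$ and apply the upper estimate of Theorem~\ref{t:dihedral}: the subgroup $C_n\subset C_{4n}$ is normal in $D_{8n}$ with quotient $D_{8n}/C_n\cong D_8$, so $\Delta[D_{8n}]\le\Delta[D_8]\cdot\Delta[C_n]$. Combined with $\Delta[C_n]=q+1$ and $\Delta[D_8]=4$, this yields $\Delta[D_{8n}]\le 4(q+1)=4q+4$.

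It remains to justify $\Delta[D_8]\le 4$, which is the only computational step. Here I would exhibit an explicit four-element difference basis: writing $D_8=\langle a,b\mid a^4=b^2=e,\ bab^{-1}=a^{-1}\rangle$, the set $B=\{e,a,b,a^2b\}$ works. A direct check of the twelve nontrivial quotients $xy^{-1}$, $x,y\in B$, shows that they cover all four rotations $e,a,a^2,a^3$ and all four reflections $b,ab,a^2b,a^3b$; together with the lower bound $\tfrac{1+\sqrt{4\cdot 8-3}}2>3$ from Proposition~\ref{p:BGN}(1) this even gives $\Delta[D_8]=4$.

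There is no serious obstacle: the argument is essentially a bookkeeping of the two bounds, the only points requiring care being the rounding of the lower bound and the explicit basis for $D_8$. It is worth noting why---unlike in Theorem~\ref{t:dup}---the two bounds cannot be collapsed to a single value: one checks $(q+\tfrac34)^2=q^2+\tfrac32q+\tfrac9{16}> q^2+q+1$ for all prime powers $q\ge 2$, so $4\sqrt{q^2+q+1}< 4q+3$, meaning the lower bound rounds up only to $4q+3$ while the product bound equals $4q+4$, leaving the genuine gap recorded in the statement.
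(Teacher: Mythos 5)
Your proposal is correct and follows essentially the same route as the paper: both bounds come from Theorem~\ref{t:dihedral} applied to the factorization $4n=4\cdot n$ together with $\Delta[C_n]=q+1$ from Theorem~\ref{Singer}(1) and $\Delta[D_8]=4$, and the rounding step $4\sqrt{q^2+q+1}>4q+2$ is exactly the paper's computation. The only (harmless) difference is that you verify $\Delta[D_8]=4$ by exhibiting the explicit basis $\{e,a,b,a^2b\}$ rather than citing Table~\ref{tab:d}, and your closing remark that $4\sqrt{q^2+q+1}<4q+3$ is a correct extra observation explaining why the gap cannot be closed by these bounds alone.
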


\begin{proof} By Theorem~\ref{Singer}(1), $\Delta[C_n]=1+q$. Since $\Delta[D_8]=4$ (see Table~\ref{tab:d}), by Theorem~\ref{t:dihedral},
$$4\sqrt{q^2+q+1}=2\sqrt{4n}\le \Delta[D_{8n}]\le \Delta[D_{8}]\cdot \Delta[C_n]=4(1+q).$$ To see that $4q+3\le \Delta[D_{8n}]\le 4q+4$, it suffices to check that  $(4+4q)-4\sqrt{q^2+q+1}<2$, which is equivalent to
$\sqrt{q^2+q+1}>q+\frac12$ and to $q^2+q+1>q^2+q+\frac14$.
\end{proof}

In Table~\ref{tab:d} we present the results of computer calculation of
the difference sizes and characteristics of dihedral groups of order $\le 80$. In this
table  $lb[D_{2n}]:=\lceil \sqrt{4n}\,\rceil$ is the lower bound given
in Theorem~\ref{t:dihedral}.
With the boldface font we denote the numbers $2n\in\{14,26,42,62\}$,
equal to $2(q^2+q+1)$ for a prime power $q$. For these numbers we know that
$\Delta[D_{2n}]=lb[D_{2n}]=2q+2$. For $q=2$ and $n=q^2+q+1=7$ the table shows that $\Delta[D_{56}]=\Delta[D_{8n}]=11=4q+3$, which means that the lower bound $4q+3$ in Proposition~\ref{t:8dup} is attained.

\begin{table}[ht]
\caption{Difference sizes and characteristics of dihedral groups $D_{2n}$ for
$2n\le80$.}\label{tab:d}
\begin{tabular}{|c|c|c|c|c||c|c|c|c|c|}
\hline
$2n$   & \!$lb[D_{2n}]$\! & $\Delta[D_{2n}]$ &$2\Delta[C_n]$   & \!$\eth[D_{2n}]$ & $2n$   & \!$lb[D_{2n}]$\! & $\Delta[D_{2n}]$ &$2\Delta[C_n]$   & \!$\eth[D_{2n}]$\\
\hline
2 & 2 & 2 & 2 & 1.4142...\!\! & {\bf 42} & 10 & 10 & 10 & 1.5430...\!\!\\
4 & 3 & 3 & 4 & 1.5\!\! & 44 & 10 & 10 & 12 & 1.5075...\!\!\\
6 & 4 & 4 & 4 & 1.6329...\!\! & 46 & 10 & 11 & 12 & 1.6218...\!\!\\
8 & 4 & 4 & 6 & 1.4142...\!\! & 48 & 10 & 10 & 12 & 1.4433...\!\!\\
10 & 5 & 5 & 6 & 1.5811...\!\! & 50 & 10 & 11 & 12 & 1.5556...\!\!\\
12 & 5 & 5 & 6 & 1.4433...\!\! & 52 & 11 & 11 & 12 & 1.5254...\!\!\\
{\bf 14} & 6 & 6 & 6 & 1.6035...\!\! & 54 & 11 & 12 & 12 & 1.6329...\!\!\\
16 & 6 & 6 & 8 & 1.5\!\! & 56 & 11 & 11 & 12 & 1.4699...\!\!\\
18 & 6 & 7 & 8 & 1.6499...\!\! & 58 & 11 & 12 & 14 & 1.5756...\!\!\\
20 & 7 & 7 & 8 & 1.5652...\!\! & 60 & 11 & 12 & 14 & 1.5491...\!\!\\
22 & 7 & 8 & 8 & 1.7056...\!\! & {\bf 62} & 12 & 12 & 12 & 1.5240...\!\!\\
24 & 7 & 7 & 8 & 1.4288...\!\! & 64 & 12 & 12 & 14 & 1.5\!\!\\
{\bf 26} & 8 & 8 & 8 & 1.5689...\!\! & 66 & 12 & 13 & 14 & 1.6001...\!\!\\
28 & 8 & 8 & 10 & 1.5118...\!\! & 68 & 12 & 13 & 14 & 1.5764...\!\!\\
30 & 8 & 8 & 10 & 1.4605...\!\! & 70 & 12 & 12 & 14 & 1.4342...\!\!\\
32 & 8 & 9 & 10 & 1.5909...\!\! & 72 & 12 & 13 & 14 & 1.5320...\!\!\\
34 & 9 & 9 & 10 & 1.5434...\!\! & 74 & 13 & 14 & 14 & 1.6274...\!\!\\
36 & 9 & 9 & 10 & 1.5\!\! & 76 & 13 & 14 & 16 & 1.6059...\!\!\\
38 & 9 & 10 & 10 & 1.6222...\!\! & 78 & 13 & 14 & 14 & 1.5851...\!\!\\
40 & 9 & 9 & 12 & 1.4230...\!\! & 80 & 13 & 14 & 16 & 1.5652...\!\!\\
\hline
\end{tabular}
\end{table}

\begin{theorem}\label{t:max} For any number $n\in \IN$ the dihedral group
$D_{2n}$ has the difference characteristic
$$\sqrt{2}\le\eth[D_{2n}]\leq\frac{48}{\sqrt{586}}\approx 1.983.$$
Moreover, if $n\ge 2\cdot 10^{15}$, then  $\eth[D_{2n}]<\frac{4}{\sqrt{6}}\approx1.633 $.
\end{theorem}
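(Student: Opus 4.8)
The plan is to reduce everything to the cyclic estimates of Theorem~\ref{t:cyclic} through the factorization bound $\eth[D_{2n}]\le\sqrt2\,\eth[C_n]$ supplied by Corollary~\ref{c:dihedral}, handling a tiny exceptional set of $n$ by hand. The lower bound $\sqrt2\le\eth[D_{2n}]$ is immediate from Corollary~\ref{c:dihedral}, so only the two upper bounds require work.

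For the bound $\eth[D_{2n}]\le\frac{48}{\sqrt{586}}$ I would split into three cases, using the numerical identity $\frac{48}{\sqrt{586}}=\sqrt2\cdot\frac{24}{\sqrt{293}}$ (valid since $586=2\cdot293$). First, for every $n\ge9$ with $n\ne292$, Theorem~\ref{t:cyclic}(4) gives $\eth[C_n]\le\frac{24}{\sqrt{293}}$, whence Corollary~\ref{c:dihedral} yields $\eth[D_{2n}]\le\sqrt2\,\eth[C_n]\le\frac{48}{\sqrt{586}}$. Second, for $n\le8$ (that is, $2n\le16$) I would simply read the values of $\eth[D_{2n}]$ off Table~\ref{tab:d} and check that each lies below $\frac{48}{\sqrt{586}}\approx1.983$; the largest such entry is well under the threshold.

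The third case, $n=292$, is the one point where cyclic bound (4) fails, and it is the main (if modest) obstacle: routing through Theorem~\ref{t:cyclic}(3) only gives $\eth[D_{584}]\le\sqrt2\cdot\frac{12}{\sqrt{73}}\approx1.986$, which just exceeds the target. Instead I would exploit the factorization $292=4\cdot73$ together with the submultiplicativity of Theorem~\ref{t:dihedral}. Since $73=8^2+8+1$, Theorem~\ref{Singer}(1) gives $\Delta[C_{73}]=9$, while $\Delta[D_8]=4$ from Table~\ref{tab:d}; hence $\Delta[D_{584}]\le\Delta[D_8]\cdot\Delta[C_{73}]=36$ and $\eth[D_{584}]\le\frac{36}{\sqrt{584}}\approx1.49<\frac{48}{\sqrt{586}}$, disposing of the exceptional value comfortably.

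Finally, for the asymptotic statement I would again use an identity, namely $\frac{4}{\sqrt6}=\sqrt2\cdot\frac{2}{\sqrt3}$. For $n\ge2\cdot10^{15}$, Theorem~\ref{t:cyclic}(5) gives $\eth[C_n]<\frac{2}{\sqrt3}$, so Corollary~\ref{c:dihedral} immediately delivers $\eth[D_{2n}]\le\sqrt2\,\eth[C_n]<\frac{4}{\sqrt6}$. The whole argument is thus an assembly of the earlier results; the only genuine care needed is at the single exceptional value $n=292$, where one must abandon the cyclic characteristic bound in favour of the direct factorization bound of Theorem~\ref{t:dihedral}.
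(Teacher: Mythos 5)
Your proposal is correct and follows essentially the same route as the paper: lower bound and the generic case via Corollary~\ref{c:dihedral} with Theorem~\ref{t:cyclic}(4) and (5), small $n$ by Table~\ref{tab:d}, and the exceptional value $n=292$ via the factorization $D_{584}=D_{8\cdot 73}$ and Theorem~\ref{t:dihedral}. Your bound $\Delta[D_{584}]\le\Delta[D_8]\cdot\Delta[C_{73}]=36$ is exactly the paper's $\eth[D_{584}]\le\eth[D_8]\cdot\eth[C_{73}]=\frac{36}{\sqrt{584}}$ in difference-size form, so the two arguments coincide.
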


\begin{proof} By Corollary~\ref{c:dihedral},  $\sqrt{2}\le\eth[D_{2n}]\le \sqrt{2}\cdot\eth[C_{n}]$.
If $n\ge 9$ and $n\ne 292$, then $\eth[C_n]\le\frac{24}{\sqrt{293}}$ by Theorem~\ref{t:cyclic}(4), and
hence
$\eth[D_{2n}]\le \sqrt{2}\cdot\eth[C_{n}]\leq\sqrt{2}\cdot\frac{24}{\sqrt{293}}=\frac{48}{\sqrt{586}}$.
If $n=292$, then known values $\eth[C_{73}]=\frac{9}{\sqrt{73}}$ (given in Table~\ref{tab:cycl}),
$\eth[D_{8}]=\frac{4}{\sqrt{8}}=\sqrt{2}$ (given in Table~\ref{tab:d}) and Theorem~\ref{t:dihedral} yield the upper bound
$$\eth[D_{2\cdot 292}]=\eth[D_{8\cdot 73}]\le\eth[D_8]\cdot\eth[C_{73}]=\sqrt{2}\cdot \frac{9}{\sqrt{73}}<\frac{48}{\sqrt{586}}.$$

Analyzing the data from Table~\ref{tab:d}, one can check that
$\eth[D_{2n}]\le \frac{48}{\sqrt{586}}\approx 1.983$ for all $n\le 8$.

If $n\ge 2\cdot 10^{15}$, then $\eth[C_{n}]<\frac{2}{\sqrt{3}}$ by Theorem~\ref{t:cyclic}(5), and hence
$$\eth[D_{2n}]\leq\sqrt{2}\cdot\eth[C_{n}]<\frac{4}{\sqrt{6}}.$$

\end{proof}

\begin{question}\label{q:822} Is
$\sup_{n\in\IN}\eth[D_{2n}]=\eth[D_{22}]=\frac{8}{\sqrt{22}}\approx1.7056$?
\end{question}

To answer Question~\ref{q:822} affirmatively, it suffices to check that $\eth[D_{2n}]\le\frac8{\sqrt{22}}$ for all $n<1\,212\,464$.

\begin{proposition} The inequality $\eth[D_{2n}]\le\sqrt{2}\cdot\eth[C_n]\le \frac8{\sqrt{22}}$ holds for all $n\ge 1\,212\,464$.
\end{proposition}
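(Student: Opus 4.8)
The first inequality $\eth[D_{2n}]\le\sqrt2\cdot\eth[C_n]$ is exactly the upper bound supplied by Corollary~\ref{c:dihedral}, and it holds for every $n$; so the entire content of the statement is the second inequality $\sqrt2\cdot\eth[C_n]\le\frac8{\sqrt{22}}$. The plan is to push this estimate back to the difference characteristic of an order-interval in $\IZ$, where the two factors of $\sqrt2$ will cancel, and then to feed in the known asymptotics of $\eth[m]$. Concretely, I would first apply Proposition~\ref{p:c<n}: setting $m:=\lceil\frac{n-1}2\rceil$ we have $\Delta[C_n]\le\Delta[m]$, and since $m\le\frac n2$ this yields
$$\sqrt2\cdot\eth[C_n]=\sqrt2\,\frac{\Delta[C_n]}{\sqrt n}\le\sqrt2\,\frac{\Delta[m]}{\sqrt n}=\sqrt2\,\frac{\sqrt m}{\sqrt n}\,\eth[m]\le\eth[m].$$
Thus it suffices to prove the purely one-dimensional bound $\eth[m]\le\frac8{\sqrt{22}}$ for $m=\lceil\frac{n-1}2\rceil$. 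A direct check shows that $\lceil\frac{n-1}2\rceil\ge 606232$ is equivalent to $n\ge 1\,212\,464$, so the proposition reduces to proving $\eth[m]\le\frac8{\sqrt{22}}$ for all $m\ge 606232$.

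Next I would extract this interval estimate from the asymptotics recalled just after Corollary~\ref{c:dihedral}. There it is stated that the limit $\lim_{m\to\infty}\eth[m]=\inf_{m\in\IN}\eth[m]$ exists and is at most $\frac{128}{\sqrt{6166}}\approx 1.630$, which is strictly below $\frac8{\sqrt{22}}\approx 1.7056$. Since the sequence $(\eth[m])_{m\in\IN}$ actually converges to this infimum, some threshold beyond which $\eth[m]\le\frac8{\sqrt{22}}$ certainly exists; the real task is to certify that the threshold may be taken as small as $606232$. To make the convergence effective I would invoke the explicit near-optimal difference bases for order-intervals constructed by R\'edei--R\'enyi~\cite{RR}, Leech~\cite{Leech} and Golay~\cite{Golay} and exploited in \cite{BG}, which furnish, for every sufficiently large length, a difference basis $B\subset[1,m]$ with $|B|\le\frac8{\sqrt{22}}\sqrt m$. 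Combining one such family with the monotonicity $\Delta[m']\le\Delta[m]$ for $m'\le m$ then lets one fill in the lengths lying between consecutive constructed bases and so certify the bound for every $m\ge 606232$.

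The hard part will be exactly this last step: converting the qualitative convergence $\eth[m]\to\inf_k\eth[k]<\frac8{\sqrt{22}}$ into the effective assertion ``$\eth[m]\le\frac8{\sqrt{22}}$ for all $m\ge 606232$''. The subtlety is that $\eth[m]$ is not monotone, so knowing its values only on a sparse set of favourable lengths is insufficient; one must control the oscillations and guarantee that no length $m\ge 606232$ produces a characteristic exceeding the conjectured extremal value $\frac8{\sqrt{22}}=\eth[D_{22}]$ of Question~\ref{q:822}. Once a concrete ruler family is fixed, the remaining work is the bookkeeping that pins down the constant $606232$ (equivalently $n\ge 1\,212\,464$) together with the verification that the gaps between successive constructed lengths are small enough for the monotonicity argument to keep $\eth[m]$ below the target throughout.
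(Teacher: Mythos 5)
Your opening reduction is sound: the first inequality is just Corollary~\ref{c:dihedral}, and via Proposition~\ref{p:c<n} the claim does reduce to the effective interval bound $\eth[m]\le\frac{8}{\sqrt{22}}$ for all $m\ge 606\,232$. But that bound is precisely where your argument stops being a proof. The only facts you invoke about $\eth[m]$ are the qualitative statement $\lim_m\eth[m]=\inf_m\eth[m]\le\frac{128}{\sqrt{6166}}$ and the existence of near-optimal rulers of R\'edei--R\'enyi, Leech and Golay; neither supplies an explicit threshold, and you say yourself that ``the real task is to certify that the threshold may be taken as small as $606232$'' and that this is ``the hard part''. That hard part is the entire content of the proposition, and it is left undone: you never exhibit a concrete family of difference bases, never bound the gaps between consecutive constructed lengths, and never verify that the resulting piecewise estimate stays below $\frac{8}{\sqrt{22}}$ all the way down to $606\,232$. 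Note also that the monotonicity $\Delta[m']\le\Delta[m]$ only transports an upper bound \emph{downwards} from a constructed length $M$ to all $m\le M$, at the cost of a factor $\sqrt{M/m}$ in the characteristic; controlling that factor uniformly is exactly an explicit prime-gap problem, which your sketch acknowledges but does not solve.

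For comparison, the paper does not pass to intervals at all. It argues by contradiction directly on $\eth[C_n]$, using Corollary 4.9 of \cite{BG} (namely $\Delta[C_n]\le 4(q_{k+1}+1)$, where $q_{k+1}$ is the least prime power with $n\le 12q_{k+1}^2+14q_{k+1}+15$), Dusart's explicit estimate $q_{k+1}\le q_k+\frac{q_k}{2\ln^2 q_k}$ for $q_k\ge 3275$, and a finite inspection of prime-gap tables to force $q_k\le 317$; that is how the exact constant $1\,212\,464=11\cdot 332^2$ emerges. Some argument of this effective type --- an explicit construction, an explicit gap bound, and a finite check --- is unavoidable here; without it your proposal remains a plausible plan rather than a proof.
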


\begin{proof} It suffices to prove that $\eth[C_n]\le\frac4{\sqrt{11}}$ for all $n\ge 1\,212\,464$.
To derive a contradiction, assume that $\eth[C_n]>\frac4{\sqrt{11}}$ for some $n\ge 1\,212\,464$.
Let $(q_k)_{k=1}^\infty$ be an increasing enumeration of prime powers.
Let $k\in\IN$ be the unique number such that $12q_{k}^2+14q_{k}+15<n\le 12q_{k+1}^2+14q_{k+1}+15$.
By Corollary 4.9 of \cite{BG}, $\Delta[C_n]\le 4(q_{k+1}+1)$. The inequality $\eth[C_n]>\frac4{\sqrt{11}}$ implies $$4(q_{k+1}+1)\ge \Delta[C_n]>\frac4{\sqrt{11}}\sqrt{n}\ge \frac4{\sqrt{11}}\sqrt{12q_{k}^2+14q_{k}+16}.$$ By Theorem 1.9 of \cite{Dusart}, if $q_{k}\ge 3275$, then $q_{k+1}\le q_{k}+\frac{q_{k}}{2\ln^2(q_{k})}$. On the other hand, using WolframAlpha computational knowledge engine it can be shown that the inequality $1+x+\frac{x}{2\ln^2(x)}\le \frac1{\sqrt{11}}\sqrt{12x^2+14x+16}$ holds for all $x\ge 43$. This implies that $q_{k}<3275$.

Analysing the table\footnote{See {\tt https://primes.utm.edu/notes/GapsTable.html}  and {\tt https://primes.utm.edu/lists/small/1000.txt}} of (maximal gaps between) primes, it can be shows that $11(q_{k+1}+1)^2\le {12q_k^2+14q_k+16}$ if $q_{k}\ge 331$.  So, $q_k\le 317$, $q_{k+1}\le 331$ and
$11\cdot (q_{k+1}+1)^2=11\cdot 332^2=1\,212\,464\le n$, which contradicts $4(q_{k+1}+1)>\frac4{\sqrt{11}}\sqrt{n}$.
\end{proof}
\newpage

\end{document}